\renewcommand*\subjclass[2][2000]{%
  \def\@subjclass{#2}%
  \@ifundefined{subjclassname@#1}{%
    \ClassWarning{\@classname}{Unknown edition (#1) of Mathematics
      Subject Classification; using '1991'.}%
  }{%
    \@xp\let\@xp\subjclassname\csname subjclassname@#1\endcsname
  }%
}
\newtheorem{theorem}{Theorem}[section]
\newtheorem*{lemma*}{Lemma}
\newtheorem{corollary}[theorem]{Corollary}
\theoremstyle{definition}
\newtheorem{example}[theorem]{Example}
\theoremstyle{remark}
\newtheorem{remark}[theorem]{Remark}
\numberwithin{equation}{section}
\newcommand{\abs}[1]{\lvert#1\rvert}
\def\XXint#1#2#3{{\setbox0=\hbox{$#1{#2#3}{\int}$}
\vcenter{\hbox{$#2#3$}}\kern-.5\wd0}}
\def\le{\leqslant}
\def\ge{\geqslant}
\begin{document}

\title{Gauss map of a harmonic surface} \subjclass{Primary 35J05; Secondary 47G10}


\keywords{Harmonic mappings, Quasiconformal mappings, Poisson
integral, Minimal surfaces, Regular surfaces}
\author{David Kalaj}
\address{University of Montenegro, Faculty of Natural Sciences and
Mathematics, Cetinjski put b.b. 81000 Podgorica, Montenegro}
\email{davidk@t-com.me}

\begin{abstract} We prove that the distortion function of  the Gauss map of a
harmonic surface coincides with the distortion function  of the
surface. Consequently, Gauss map of a harmonic surface is
${\mathcal{K}}$ quasiconformal if and only if the surface is
${\mathcal{K}}$ quasiconformal, provided that the Gauss map is
regular or what is shown to be the same, provided that the surface
is non-planar.
\end{abstract} \maketitle


\section{Introduction and statement of the main result}
An orientation preserving  smooth mapping $\varphi :\Omega\to
\Omega'$, between two open domains $\Omega, \Omega'\subset \mathbf
C$ is called ${\mathcal{K}}$ (${\mathcal{K}}\ge 1$) quasiconformal
if dilatation $d_f$ of the Beltrami coefficient $\mu(z):={\varphi
_{\bar z}}/{\varphi _z}$ satisfies the inequality
$$d_\varphi :=\abs{\mu(z)}\le
k:=\frac{{\mathcal{K}}-1}{{\mathcal{K}}+1},$$ or what is the same if
$$\|\nabla \varphi\|^2\le
\frac 12\left({\mathcal{K}}+\frac
1{{\mathcal{K}}}\right)J_\varphi,$$ where $\|\nabla \varphi\|$ is
the Hilbert-Schmidt norm defined by $$ \|\nabla
\varphi\|^2:=|\varphi_z|^2+|\varphi_{\bar z}|^2$$ and $$J_\varphi
=(|\varphi_z|^2-|\varphi_{\bar z}|^2)$$ is the Jacobian of $\nabla
\varphi$.

 Note that in our context is enough to assume that $\varphi$ is smooth
mapping, however, the classical definition of quasiconfomality
assumes weaker conditions (cf. \cite[pp. 3, 23--24]{Ahl}). Note also
that, in this definition we do not assume injectivity.

\subsection{Parametric Surfaces}
We define an oriented parametric surface $\, \mathcal M\,$ in
$\,\mathbf R^3\,$ to be an equivalence class of mappings $\,Y = (a,
b , c) : D \rightarrow \mathbf R^3\,$ of some domain $\,D \subset
\mathbf C\,$ into $\,\mathbf R^3\,$, where the coordinate functions
$a,$ $b$, $c$ are of class at least $\,\mathscr C^1(D)\,$. Two such
mappings $X: D \rightarrow \mathbf R^3\,$ and $\,\tilde{X} :
\tilde{D} \rightarrow \mathbf R^3\,$, referred to as
parametrizations of the surface, are said to be equivalent if there
is a $\,\mathscr C^1$-diffeomorphism $\,\phi
:\tilde{D}\overset{\textnormal{\tiny{onto}}}{\longrightarrow} D \,$
of positive Jacobian determinant  such that $\; \tilde{Y} = Y\circ
\phi\,$.  Let us call such $\,\phi\,$ a \textit{change of variables,
or reparametrization} of the surface. Furthermore, we assume that
the branch (critical) points of  $\, \mathcal M\,$  are isolated.
These are the points  $\,(x,y) \in D\,$  at which the tangent
vectors $\,Y_x = \frac{\partial Y}{\partial x},\; Y_y =
\frac{\partial Y }{\partial y}\,$ are linearly dependent or
equivalently $Y_x\times Y_y\neq 0$ where by $\times$ is denoted the
standard vectorial product {in the space} $\Bbb R^3$. Equivalently,
at the critical points the \textit{Jacobian matrix} $\nabla Y(z)$
has rank at most 1. It has full rank  2 at the \textit{regular
points}. A surface with no critical points is called an
\textit{immersion} or a \textit{regular surface}.

For regular points of the surface we define the distortion function
by
$$\mathfrak{D}_Y=\frac{\|Y_x\|^2 + \|Y_{y}\|^2}{2\|Y_x\times
Y_y\|}.$$

A mapping $Y$ is called ${\mathcal{K}}$ quasiconformal if
\begin{equation}\label{surfeq}\|Y_x\|^2 + \|Y_{y}\|^2 \le
\left({\mathcal{K}}+\frac{1}{{\mathcal{K}}}\right)\|Y_x\times
Y_y\|,\ \ z=x+iy\in D.\end{equation} If ${\mathcal{K}} = 1$ then
\eqref{surfeq} is equivalent to the system of the equations

\begin{equation}\label{surfeq1}
\|Y_x\|^2= \|Y_y\|^2\ \ \text{and} \ \ \left<Y_x,Y_y\right> = 0,
\end{equation}
which represent isothermal (conformal) coordinates of the surface
$M$. If $u$ is harmonic and satisfies the system \eqref{surfeq1}
then $M$ is a minimal surface.

If $Y$ is an immersion of $M$ and $X=(u,v,w)$ are isothermal
coordinates of a smooth surface $M$, and if $\varphi=X^{-1}\circ Y$,
then we have
\begin{equation}\label{frak}\mathfrak{D}_Y=\mathfrak{D}_\varphi=\frac{|\varphi
_z|^2+|\varphi_{\bar z}|^2}{|\varphi_z|^2-|\varphi_{\bar
z}|^2}.\end{equation}
\subsection{Gauss map of a surface} Let $X:\Omega\to \mathbf R^3$ be a smooth regular surface and let $M=X(\Omega)$. Let $S^2$ be the unit sphere in $\mathbf R^3$
and let $\mathbf{n} : \Omega\to  S^2$ be the orientation preserving
Gauss map of $M$ defined as follows
$$\mathbf{n}(z)=\frac{X_u\times X_v}{\|X_u\times X_v\|}.$$ If we denote by $f : S^2\setminus \{(0, 0, 1)\} \to \mathbf
R^2$, $$f(x_1,x_2,x_3)= \left(\frac{x_1}{1- x_3}, \frac{x_2}{1-
x_3}\right)$$ the stereographic projection, then the orientation
preserving map $\mathfrak{g} := f\circ \mathbf{n}$ is said to be the
complex Gauss map of $X$.

Since $f$ is a conformal mapping, then $\mathfrak{g}$ is
quasiconformal if and only if $\mathbf{n}$ is quasiconformal with
the same constant of quasiregularity. Moreover
\begin{equation}\label{math}\mathfrak{D}_{\mathbf n}=\mathfrak{D}_{\mathfrak
g}=\frac{|\mathfrak g_z|^2+|\mathfrak g_{\bar z}|^2}{|\mathfrak
g_z|^2-|\mathfrak g_{\bar z}|^2}.\end{equation}

A smooth enough surface can be parameterized using a isothermal
parameterization. Such a parameterization is minimal if the
coordinate functions $x_k$ are harmonic, i.e., $\phi_k(\zeta)$ are
analytic. A minimal surface can therefore be defined by a triple of
analytic functions such that
\begin{equation}\label{2}\phi_1^2+\phi_2^2+\phi_3^2=0. \end{equation}

The real parameterization is then obtained as
\begin{equation}\label{3}x_k=\Re \int \phi_k(\zeta)d\zeta.
\end{equation}

But, for an analytic function $p$ and a meromorphic function $q$,
the triple of functions \begin{equation}\label{31}\phi_1(\zeta) =
p(1-q^2)\end{equation}
  \begin{equation}\label{4} \phi_2(\zeta) = ip(1+q^2)
\end{equation}

\begin{equation}\label{5} \phi_3(\zeta) =   2pq \end{equation}
are analytic as long as $p$ has a zero of order $\ge m$ at every
pole of $q$ of order $m$. This gives a minimal surface in terms of
the Enneper-Weierstrass parameterization
\begin{equation}\label{7} \Re \int\{p(1-q^2); ip(1+q^2);
2pq\}d\zeta.\end{equation}

It is well known that if the surface is minimal endowed with
Enneper-Weierstrass parameterization, then its complex Gauss map is
a meromorphic function $\mathfrak{g}(w)=-i/q'(w)$. See for example
\cite[Sec.~9.3]{dure} for the above facts.

In this paper, we extend this result to quasiconformal harmonic
surfaces by proving the following theorem.

\begin{theorem}\label{qaz}
The dilatation and distortion function of the Gauss map $\mathbf n$
of a harmonic surface $X$ coincides with the dilatation and
distortion function of the surface itself, provided that the Gauss
map is regular. In other words, if
\begin{equation}\label{kusht}\frac{\partial \mathbf{n}}{\partial x}(z)\times
\frac{\partial \mathbf{n}}{\partial y}(z)\neq 0,\end{equation} then
\begin{equation}\label{per1}\mathfrak{D}_{\mathbf n}(z)=\mathfrak{D}_{\mathbf X}(z)\end{equation} and
\begin{equation}\label{per2}d_X(z)=d_{\mathbf n}(z).\end{equation}
\end{theorem}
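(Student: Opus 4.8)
The plan is to express both distortion functions through the fundamental forms of $X$, and then to read the statement off from harmonicity together with the classical identity relating the first, second and third fundamental forms. Write $I$ for the first fundamental form of $X$, with coefficients $E=\|X_x\|^{2}$, $F=\langle X_x,X_y\rangle$, $G=\|X_y\|^{2}$; write $II$ for the second fundamental form, with coefficients $L=\langle X_{xx},\mathbf n\rangle$, $M=\langle X_{xy},\mathbf n\rangle$, $N=\langle X_{yy},\mathbf n\rangle$; and write $III$ for the third fundamental form, i.e.\ the pull-back by $\mathbf n$ of the round metric of $S^{2}$, with coefficients $\langle\mathbf n_x,\mathbf n_x\rangle$, $\langle\mathbf n_x,\mathbf n_y\rangle$, $\langle\mathbf n_y,\mathbf n_y\rangle$. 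Since the round metric on $S^{2}$ is the restriction of the Euclidean metric of $\mathbf R^{3}$ and the stereographic projection is conformal, the identity $\|a\times b\|^{2}=\|a\|^{2}\|b\|^{2}-\langle a,b\rangle^{2}$ gives directly from the definitions
\[
\mathfrak D_{X}=\frac{E+G}{2\sqrt{EG-F^{2}}}=\frac{\operatorname{tr}I}{2\sqrt{\det I}},\qquad
\mathfrak D_{\mathbf n}=\frac{\|\mathbf n_x\|^{2}+\|\mathbf n_y\|^{2}}{2\|\mathbf n_x\times\mathbf n_y\|}=\frac{\operatorname{tr}III}{2\sqrt{\det III}} .
\]
So the whole matter reduces to comparing $\operatorname{tr}III$ and $\det III$ with $\operatorname{tr}I$ and $\det I$.

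The key observation is that harmonicity of $X$ is exactly the vanishing of the trace of $II$ taken in the parameter $z=x+iy$ (not in the induced metric): from $\Delta X=X_{xx}+X_{yy}=0$, pairing with the unit normal $\mathbf n$ gives
\[
L+N=\langle X_{xx}+X_{yy},\mathbf n\rangle=0 .
\]
Hence the Gauss curvature satisfies $K=\dfrac{LN-M^{2}}{EG-F^{2}}=-\dfrac{L^{2}+M^{2}}{EG-F^{2}}\le 0$, and the area--distortion identity $\|\mathbf n_x\times\mathbf n_y\|=|K|\,\|X_x\times X_y\|$ (which follows from $\mathbf n_x=-SX_x$, $\mathbf n_y=-SX_y$ for the Weingarten map $S$ together with $\det S=K$) shows that $\mathbf n$ is non-regular precisely where $K=0$, i.e.\ where $II$ vanishes; this also accounts for the ``non-planar'' reformulation of the hypothesis mentioned in the introduction. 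In particular, at any point where \eqref{kusht} holds we have $K<0$.

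Next I invoke the classical identity $III=2H\cdot II-K\cdot I$ (the Cayley--Hamilton relation $S^{2}=2HS-K\,\id$ for the Weingarten map, $H$ the mean curvature, transported to bilinear forms via $I\,S=II$) together with $\det III=K^{2}\det I$ (from $III=II\,I^{-1}II$ and $\det II=K\det I$). Taking ordinary matrix traces and using $\operatorname{tr}II=L+N=0$ from harmonicity,
\[
\operatorname{tr}III=2H(L+N)-K\operatorname{tr}I=-K\operatorname{tr}I=|K|\operatorname{tr}I,\qquad \sqrt{\det III}=|K|\sqrt{\det I},
\]
the last equality because $K<0$ at the point in question. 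Substituting into the formula for $\mathfrak D_{\mathbf n}$, the factor $|K|$ cancels and $\mathfrak D_{\mathbf n}=\mathfrak D_X$, which is \eqref{per1}. Finally, by \eqref{frak} and \eqref{math} both distortions have the form $\mathfrak D=\dfrac{1+d^{2}}{1-d^{2}}$ in terms of the respective dilatations $d\in[0,1)$, and $d\mapsto\dfrac{1+d^{2}}{1-d^{2}}$ is strictly increasing on $[0,1)$; hence $\mathfrak D_{\mathbf n}=\mathfrak D_X$ forces $d_{\mathbf n}=d_X$, which is \eqref{per2}.

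The genuine difficulty here is conceptual rather than computational: one has to recognise that harmonicity of $X$ is the single identity $\operatorname{tr}II=0$ and that it should be married to $III=2H\cdot II-K\cdot I$, after which the conclusion is a one-line cancellation. A brute-force alternative is available — set $\Phi=X_z=(\phi_1,\phi_2,\phi_3)$, which is holomorphic because the coordinate functions are harmonic, so that $\mathfrak D_X=|\Phi|^{2}\big/\sqrt{\,|\Phi|^{4}-\bigl|\sum_k\phi_k^{2}\bigr|^{2}}$, and then differentiate $\mathbf n=(X_x\times X_y)/\|X_x\times X_y\|$ directly using $X_x\times X_y=-2i\,\Phi\times\overline{\Phi}$ — but this route is considerably more laborious, and the fundamental-form computation above seems preferable.
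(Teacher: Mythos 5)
Your proof is correct, and it takes a genuinely different route from the one in the paper. The paper's proof is a direct computation: it first normalizes the surface so that its first coordinate is $x$ (precomposing with the inverse of the analytic completion $\phi=a+i\tilde a$ near a point where $\phi'\neq 0$), writes out $\mathbf n$, $|\mathbf n_x|^2+|\mathbf n_y|^2$ and $|\mathbf n_x\times \mathbf n_y|$ explicitly in terms of the first and second derivatives of $b$ and $c$, and then uses $\Delta b=\Delta c=0$ repeatedly to collapse the large numerator polynomial to $\gamma$ times the denominator polynomial, with $\gamma=1+b_x^2+b_y^2+c_x^2+c_y^2$, whence $\mathfrak D_{\mathbf n}=\gamma/G=\mathfrak D_X$. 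You instead encode harmonicity as the single coordinate identity $\operatorname{tr}II=\langle X_{xx}+X_{yy},\mathbf n\rangle=0$ and marry it to the Cayley--Hamilton relation $III-2H\cdot II+K\cdot I=0$ together with $\det III=K^2\det I$; taking the coordinate trace kills the mean-curvature term and the factor $|K|$ cancels, legitimately, because under \eqref{kusht} the Gauss curvature is strictly negative (your identity $\|\mathbf n_x\times\mathbf n_y\|=|K|\,\|X_x\times X_y\|$ also explains conceptually why non-regularity of the Gauss map is equivalent to the vanishing of $II$, which is the content the paper has to re-derive by hand in Theorem~\ref{planar}). The two hypotheses you use --- regularity of $X$ so that $I$ is invertible, and \eqref{kusht} so that $K\neq 0$ --- are exactly those of the theorem, and your passage from \eqref{per1} to \eqref{per2} via strict monotonicity of $d\mapsto(1+d^2)/(1-d^2)$ is the same (implicit) step the paper takes when it cites \eqref{frak} and \eqref{math}. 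Your route buys brevity, coordinate-invariance, and a transparent explanation of where harmonicity enters; the paper's route is elementary at the level of partial derivatives and yields as a by-product the explicit formula for $\mathfrak D_{\mathbf n}$ recorded in the corollary that follows the proof.
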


\begin{corollary}\label{qazu}
If  a harmonic parametric surface $X$ is ${\mathcal{K}}$
quasiconformal then its Gauss map is ${\mathcal{K}}$ quasiconformal,
privided that the Gauss map is regular.
\end{corollary}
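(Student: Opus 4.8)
\emph{Proof plan.} The corollary is immediate once Theorem~\ref{qaz} is available: if $X$ is $\mathcal K$ quasiconformal, then \eqref{surfeq} together with the definition of the distortion gives $\mathfrak D_X(z)\le\tfrac12(\mathcal K+1/\mathcal K)$ at every regular point, and by \eqref{per1} the same bound holds for $\mathfrak D_{\mathbf n}(z)$, which is precisely \eqref{surfeq} for $\mathbf n$. So the real task is Theorem~\ref{qaz}, and I would prove it as follows. Put $\phi:=\partial_z X=\tfrac12(X_x-iX_y)$, a $\mathbf C^3$-valued map, and extend the Euclidean product of $\mathbf R^3$ to the $\mathbf C$-bilinear form $\langle\cdot,\cdot\rangle$ on $\mathbf C^3$. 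Since the coordinates of $X$ are harmonic, $\partial_{\bar z}\phi=\tfrac14\Delta X=0$, i.e.\ $\phi$ is holomorphic, while $X_x=\phi+\bar\phi$ and $X_y=i(\phi-\bar\phi)$. Writing $P=\langle\phi,\phi\rangle$ and $Q=\langle\phi,\bar\phi\rangle=\|\phi\|^2$, a direct computation (Lagrange's identity) gives $\|X_x\|^2+\|X_y\|^2=4Q$ and $\|X_x\times X_y\|^2=4(Q^2-|P|^2)$, whence
\[
\mathfrak D_X \;=\; \frac{\|X_x\|^2+\|X_y\|^2}{2\|X_x\times X_y\|}\;=\;\frac{Q}{\sqrt{Q^2-|P|^2}}\;=\;\Bigl(1-|P|^2/Q^2\Bigr)^{-1/2}.
\]
The identical identity, applied to the surface $\mathbf n:\Omega\to S^2\subset\mathbf R^3$, expresses $\mathfrak D_{\mathbf n}$ via $P_{\mathbf n}:=\langle\partial_z\mathbf n,\partial_z\mathbf n\rangle$ and $Q_{\mathbf n}:=\|\partial_z\mathbf n\|^2$. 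Moreover, by \eqref{frak} and \eqref{math} the dilatation and distortion of either surface satisfy $d^2=(\mathfrak D-1)/(\mathfrak D+1)$; hence \eqref{per2} is a formal consequence of \eqref{per1}, and by the displayed formula \eqref{per1} reduces to the single identity $|P_{\mathbf n}|/Q_{\mathbf n}=|P|/Q$.

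\emph{Step 2 (structure of $\partial_z\mathbf n$).} From $X_x\times X_y=-2i\,\phi\times\bar\phi$ we see that $\mathbf n$ points in the direction of $\phi\times\bar\phi$, a vector of length $\sqrt{Q^2-|P|^2}$, positive exactly because $X$ is an immersion. Since $|\mathbf n|\equiv1$ we have $\langle\mathbf n,\partial_z\mathbf n\rangle=0$, and $\mathbf n^\perp$ is simultaneously the tangent plane of $S^2$ at $\mathbf n$ and of the surface, which (for regular $X$) is the complex span of $\{\phi,\bar\phi\}$; hence $\partial_z\mathbf n=\alpha\phi+\beta\bar\phi$ for scalar functions $\alpha,\beta$. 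Now differentiate the identity $\langle\mathbf n,\bar\phi\rangle=0$ (valid because $\bar\phi=\partial_{\bar z}X$ is tangent to the surface) with respect to $z$: because $\partial_z\bar\phi=\overline{\partial_{\bar z}\phi}=0$ by harmonicity, we obtain $\langle\partial_z\mathbf n,\bar\phi\rangle=0$, that is $\alpha Q+\beta\bar P=0$. Therefore
\[
\partial_z\mathbf n\;=\;\lambda\,(\bar P\,\phi-Q\,\bar\phi)
\]
for a scalar function $\lambda$, which is non-zero wherever $\mathbf n$ is regular (if $\lambda=0$ then $\partial_z\mathbf n=0$, hence $\mathbf n_x=0$ and $\mathbf n_x\times\mathbf n_y=0$).

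\emph{Step 3 (conclusion).} Substituting this formula and using $\langle\phi,\phi\rangle=P$, $\langle\phi,\bar\phi\rangle=Q$, $\langle\bar\phi,\bar\phi\rangle=\bar P$, a routine expansion yields
\[
Q_{\mathbf n}=\|\partial_z\mathbf n\|^2=|\lambda|^2\,Q\,(Q^2-|P|^2),\qquad
P_{\mathbf n}=\langle\partial_z\mathbf n,\partial_z\mathbf n\rangle=-\lambda^2\,\bar P\,(Q^2-|P|^2).
\]
Hence $|P_{\mathbf n}|/Q_{\mathbf n}=|P|/Q$, the common factor $|\lambda|^2(Q^2-|P|^2)$ cancelling; this is exactly the identity of Step~1, so $\mathfrak D_{\mathbf n}=\mathfrak D_X$, and then $d_{\mathbf n}=d_X$, proving the theorem and with it the corollary.

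I expect the one genuinely delicate point to be the passage $\langle\partial_z\mathbf n,\bar\phi\rangle=0$ in Step~2: it rests squarely on $\partial_z\bar\phi=0$, i.e.\ on harmonicity of $X$. For a general $\mathscr C^2$ surface one would instead get $\langle\partial_z\mathbf n,\bar\phi\rangle=-\langle\mathbf n,\partial_z\bar\phi\rangle$, a term built from the second fundamental form, and the clean factorisation of Step~3 would collapse; this is the precise sense in which the harmonicity hypothesis is used. The only other thing to check is the degenerate case $P\equiv0$ (the surface, hence also $\mathbf n$, being conformal), where $\partial_z\mathbf n=-\lambda Q\bar\phi$, $P_{\mathbf n}=0$, $Q_{\mathbf n}=|\lambda|^2Q^3$, so $\mathfrak D_{\mathbf n}=1=\mathfrak D_X$ and nothing is divided by zero.
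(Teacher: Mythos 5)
Your deduction of the corollary from Theorem~\ref{qaz} is exactly what the paper intends (the paper gives no separate argument: $\mathcal K$-quasiconformality of $X$ is the bound $\mathfrak D_X\le\frac12(\mathcal K+1/\mathcal K)$, which \eqref{per1} transfers to $\mathbf n$). But your proof of Theorem~\ref{qaz} itself is genuinely different from, and considerably cleaner than, the one in the paper. The paper first normalizes the surface so that its first coordinate is $x$ (precomposing with the conformal map $a+i\tilde a$), writes out $\mathbf n$ explicitly, and then performs a multi-page expansion of $N=G^4(|\mathbf n_x|^2+|\mathbf n_y|^2)$ and $M=G^3|\mathbf n_x\times\mathbf n_y|$ in the second-order partials of $b,c$, invoking $\Delta b=\Delta c=0$ term by term until $N$ collapses to $\gamma M$. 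You instead work with $\phi=\partial_zX$, holomorphic precisely by harmonicity, encode both distortions in the Hopf-type quantities $P=\langle\phi,\phi\rangle$, $Q=\|\phi\|^2$ via Lagrange's identity, and obtain the structural formula $\partial_z\mathbf n=\lambda(\bar P\phi-Q\bar\phi)$ from the single differentiated orthogonality relation $\langle\partial_z\mathbf n,\partial_{\bar z}X\rangle=-\langle\mathbf n,\partial_z\partial_{\bar z}X\rangle=0$. I checked the computations in Steps 1--3 and they are correct (including the sign and the cancellation of $|\lambda|^2(Q^2-|P|^2)$), and your identification of where harmonicity enters is exactly right. Your approach is coordinate-free, explains conceptually why the two Beltrami coefficients have equal modulus, and handles the conformal case $P\equiv0$ without extra work; the paper's explicit formula for $M$ does get reused in the proof of Theorem~\ref{planar}, which is the one thing the brute-force route buys. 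One cosmetic remark: the paper's displayed $\mathfrak D_{\mathbf n}=\frac{|P|^2+|Q|^2}{|P\times Q|}$ in its proof omits the factor $2$ from the general definition of $\mathfrak D_Y$; your version keeps the definitions consistent on both sides, so your identity $\mathfrak D_{\mathbf n}=\mathfrak D_X$ is the correct normalization of \eqref{per1}.
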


We also prove the following theorem
\begin{theorem}\label{planar}
The Gauss map of a harmonic surface is regular if and only if the
surface is not planar. If the Gauss map of a surface is not regular,
then it must be a constant vector.
\end{theorem}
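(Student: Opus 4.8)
The plan is to push everything through the holomorphic map $\phi:=X_z=\frac12(X_x-iX_y)\colon\Omega\to\mathbf C^{3}$, which is holomorphic precisely because the coordinate functions of $X$ are harmonic, and then to use elementary cross-product algebra in $\mathbf C^{3}$. From $X_x=\phi+\bar\phi$ and $X_y=i(\phi-\bar\phi)$ the unnormalized normal is $N:=X_x\times X_y=2i\,\bar\phi\times\phi$, so $\mathbf n=N/\norm N$; and since $\phi_{\bar z}=0$,
\begin{equation*}
N_z=2i\,\bar\phi\times\phi',\qquad N_{\bar z}=2i\,\overline{\phi'}\times\phi .
\end{equation*}
Because $\norm{\mathbf n}\equiv1$, the vectors $\mathbf n_z,\mathbf n_{\bar z}$ lie in the tangent plane $T=\mathbf n^{\perp}$, and differentiating $\mathbf n=N/\norm N$ gives $\mathbf n_z=\norm N^{-1}\Pi_T(N_z)$, $\mathbf n_{\bar z}=\norm N^{-1}\Pi_T(N_{\bar z})$, with $\Pi_T$ the orthogonal projection onto $T$. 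Using $\mathbf n_x=\mathbf n_z+\mathbf n_{\bar z}$, $\mathbf n_y=i(\mathbf n_z-\mathbf n_{\bar z})$ and the fact that both $\Pi_T(N_z),\Pi_T(N_{\bar z})$ lie in the plane $T$ with unit normal $\mathbf n$,
\begin{equation*}
\mathbf n_x\times\mathbf n_y=2i\,\mathbf n_{\bar z}\times\mathbf n_z=\frac{2i}{\norm N^{3}}\det\bigl(N_{\bar z},N_z,N\bigr)\,\mathbf n .
\end{equation*}
Expanding this determinant with $(a\times b)\times(a\times c)=\det(a,b,c)\,a$ and $(a\times b)\cdot a=0$ reduces it to a nonzero universal multiple of $\abs{D(z)}^{2}\norm N^{-3}\mathbf n$, where $D(z):=\det\bigl(\phi(z),\phi'(z),\overline{\phi(z)}\bigr)$; explicitly $\mathbf n_x\times\mathbf n_y=-16\,\abs D^{2}\norm N^{-3}\mathbf n$, which in passing also yields $K=-16\abs D^{2}\norm N^{-4}\le0$, extending the classical curvature sign for minimal surfaces. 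Thus the Gauss map is regular precisely on $\{D\neq0\}$.

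The easy direction is then immediate: if $X(\Omega)$ lies in a plane $\{\xi\colon\xi\cdot v=c\}$, differentiating $X\cdot v=c$ gives $X_x\cdot v=X_y\cdot v=0$, so $N\parallel v$, hence $\mathbf n\equiv\pm v/\norm v$ is constant and $D\equiv0$; conversely if $\mathbf n\equiv v$ is constant then $X_x\cdot v=X_y\cdot v=0$, so $(X\cdot v)_x=(X\cdot v)_y=0$ and $X\cdot v$ is constant, i.e.\ $X$ is planar. The real content is the implication $D\equiv0\Rightarrow\mathbf n$ constant. At a regular point of $X$ the tangent plane $T$ is spanned over $\mathbf R$ by $X_x,X_y$; equivalently a vector of $\mathbf C^{3}$ lies in $\operatorname{span}_{\mathbf C}(\phi,\bar\phi)$ iff it is orthogonal to $\mathbf n$. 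One has the triple-product identities
\begin{equation*}
(\bar\phi\times\phi')\cdot\phi=D,\qquad (\bar\phi\times\phi')\cdot\bar\phi=0,\qquad (\overline{\phi'}\times\phi)\cdot\phi=0,\qquad (\overline{\phi'}\times\phi)\cdot\bar\phi=-\overline D .
\end{equation*}
These say that $D\equiv0$ forces $N_z$ and $N_{\bar z}$ to be orthogonal to $T$, i.e.\ $\Pi_T(N_z)=\Pi_T(N_{\bar z})=0$, whence $\mathbf n_z=\mathbf n_{\bar z}=0$ on the open dense set of regular points of $X$; by continuity $\mathbf n$ is constant on all of $\Omega$, and then $X$ is planar by the previous paragraph. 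Combining the two directions: the Gauss map is regular iff $D\not\equiv0$ iff $X$ is non-planar, and if it is nowhere regular then it is a constant vector.

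The step I expect to be the main obstacle is the bookkeeping in the first paragraph --- evaluating $\det(N_{\bar z},N_z,N)$ and arriving at the clean form $-16\abs D^{2}\norm N^{-3}\mathbf n$ without sign slips --- together with the realization that the last step can be handled cheaply: $D\equiv0$ is equivalent to a relation $\phi'=\lambda\phi+\nu\bar\phi$ with scalar functions $\lambda,\nu$, which one might be tempted to integrate, whereas the triple-product identities give $\mathbf n_z=\mathbf n_{\bar z}=0$ at once. A small point to keep straight is that all the orthogonality here refers to the $\mathbf R$-bilinear extension of the dot product to $\mathbf C^{3}$, which is legitimate only because $\mathbf n$, $X_x$, $X_y$ are real.
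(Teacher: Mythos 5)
Your proof is correct, and it takes a genuinely different route from the paper. The paper normalizes the first coordinate to be $x$, writes $\mathbf n_x\times\mathbf n_y=0$ as the vanishing of an explicit quadratic form $M=c_y^2A-2b_yc_yB+b_y^2C$ in the Hessians of $b,c$, and then runs a Cauchy--Schwarz argument plus a power-series/homogeneous-polynomial analysis of the ratio $b_y/c_y$ to force planarity; that proof is long, and some of its intermediate steps (e.g.\ the deduction ``$b_y\ge0,\ c_y\ge0$'' and the dismissal of the cases $b_y=0$ or $c_y=0$) are delicate. You instead package the harmonicity into the holomorphic Weierstrass-type datum $\phi=X_z$ and the single invariant $D=\det(\phi,\phi',\bar\phi)$, arriving at the closed formula $\mathbf n_x\times\mathbf n_y=-16\abs{D}^2\norm{N}^{-3}\mathbf n$ (which I checked: $\det(N_{\bar z},N_z,N)=8i\abs D^2$, so the constant and sign are right), and then the triple-product identities show that $D(z_0)=0$ at a point already forces $\mathbf n_z(z_0)=\mathbf n_{\bar z}(z_0)=0$. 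This buys several things the paper's computation does not make visible: the curvature identity $K=-16\abs D^2\norm N^{-4}\le0$, the pointwise statement that degeneracy of the Gauss map at a point is equivalent to the full vanishing of $\nabla\mathbf n$ there, and a coordinate-free explanation of why non-regularity propagates to constancy. Two small caveats, neither affecting correctness: your stated triple products carry harmless sign slips (e.g.\ $(\bar\phi\times\phi')\cdot\phi=-D$ rather than $D$ with your ordering), which is immaterial since only $\abs D^2$ and the vanishing of $D$ enter; and the final ``constant on the set of regular points, hence constant'' step uses the paper's standing assumption that branch points of $X$ are isolated, so that the regular set is connected and dense --- worth saying explicitly, since the paper's own proof relies on the same hypothesis implicitly.
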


\begin{remark}
From Theorem~\ref{planar} we find out that, in Theorem~\ref{qaz},
instead of the condition "the Gauss map is regular" we can simply
say "the surface is non-planar". When we say that the Gauss map is
regular we mean that, the cross product
$\frac{\partial}{dx}\mathbf{n}\times
 \frac{\partial}{dy}\mathbf{n}$ is not identically zero at some open
 subset of the domain. However, as it can be shown by the example $$X=\left\{x,-\frac{x^3}{3}+x
 \left(\frac{1}{2}+y\right)^2,1-x^2+y+y^2\right\},$$ that $$\frac{\partial}{dx}\mathbf{n}\times
 \frac{\partial}{dy}\mathbf{n}=0,$$ for $y=-1/2$. Thus, the branch
 points of the Gauss normal of harmonic surfaces are not isolated, as in the case of
 minimal surfaces.
\end{remark}

\begin{remark}
The class of minimal surfaces with Enneper-Weierstrass
parameterization is a special case of the class of quasiconformal
harmonic surfaces. Namely in this case $\mathcal{K}=1$, because the
coordinates are isothermal (conformal). In this case the condition
\eqref{kusht} reduces to the condition that
$\mathfrak{g}(w)=-i/q'(w)=const$, i.e. the Gauss normal is a
constant. This implies that the minimal surface is planar.

The class of quasiconformal harmonic mapping between complex domains
and two-dimensional surfaces has been very active research of
investigation in recent years. For some regularity results of this
class we refer to the following papers \cite{kalajan}, \cite{mathz},
\cite{km}, \cite{MP}. For some regularity results of the class of
minimal surfaces we refer to the papers of J. C. C. Nitsche
\cite{Ni} and \cite{nic}.

Recall that by a definition of A. Alarcon and F. J. Lopez
\cite{harsur} a harmonic immersion $X : M \to \mathbf R^3$ is said
to be quasiconformal ($QC$ for short) if its orientation preserving
Gauss map $\mathbf{n} : M \to S^2$ is quasiconformal (or
equivalently, if $\mathfrak{g}$ is quasiconformal). Among other
special features, quasiconformal harmonic immersions are
quasiminimal in the sense of Osserman \cite{or}. In this case, $X$
is said to be a harmonic $QC$ parameterization of the harmonic
surface $X(M)$. Notice also this, a harmonic surface has no elliptic
points by the maximum principle for harmonic functions. In other
words, its Gauss curvature is nowhere positive. Let $w$ a harmonic
diffeomorphism of the unit disk onto itself which is not
quasiconformal. Let $X(z)=(\Re (w),\Im (w),0)$. Then
$\mathbf{n}=(0,0,1)$ and therefore it is a $1-$quasiconformal
mapping. This mean that the fact that the condition "the Gauss map
is regular" is important in Theorem~\ref{qaz}. In other words, two
above definitions of quasiconformality are equivalent, provided that
the Gauss map is regular (up to branch points) or what is the same,
if the surface is not planar.
\end{remark}
\section{Proofs}
\begin{proof}[Proof of Theorem~\ref{qaz}]  Let $$X(x,y)=(a(x,y),b(x,y),c(x,y)).$$ Without loos of
generality we can assume that $a(z)=x$. Namely, let $\phi$ be a
analytic mapping of the unit disk into $\mathbf C$ such that
$$\phi(z)=a(z)+i \tilde a(z).$$ Here $\tilde a(z)$ is the harmonic conjugate of $a(z)$. Take  instead of
$$X(z)=(a(z),b(z),c(z))$$ $$\tilde X = X\circ \phi^{-1}(z)$$ in some
neighborhood $D$ of some nonsingular point $z$ of $\phi'(z)$. Then
the first coordinate of $\tilde f$ is $x$ in $D$.

Let $$\mathbf{n}(z)=\frac{X_x\times X_y}{\|X_x\times X_y\|}.$$ Then
$\mathbf{n}(z)$ is given by
$$\left\{\frac{c_y b_x-b_y c_x}{\sqrt{b_y^2+c_y^2+\left(c_y b_x-b_y
c_x\right)^2}},-\frac{c_y}{\sqrt{b_y^2+c_y^2+\left(c_y b_x-b_y
c_x\right)^2}},\frac{b_y}{\sqrt{b_y^2+c_y^2+\left(c_y b_x-b_y
c_x\right)^2}}\right\}.$$ Define
$$P=\frac{d}{dx}\mathbf{n}(z), \ \ \ Q=\frac{d}{dy}\mathbf{n}(z).$$
The distortion function is $$\mathfrak{D}_{\mathbf
n}=\frac{|P|^2+|Q|^2}{|P\times Q|}.$$ Then
\begin{equation}\label{epara}|P|^2+|Q|^2=\frac{N}{G^4}\end{equation} and
\begin{equation}\label{edyta}|P\times Q|=\frac{M}{G^3}\end{equation}
where
$$M=\left(c_{y}^2 \left(b_{xy}^2-b_{yy} b_{xx}\right)+b_{y}
c_{y} \left(-2 b_{xy} c_{xy}+c_{yy} b_{xx}+b_{yy}
c_{xx}\right)+b_{y}^2 \left(c_{xy}^2-c_{yy} c_{xx}\right)\right),$$

$$G=\sqrt{{b_y}^2+{c_{y}}^2+\left({c_{y}} {b_x}-{b_y}
{c_{x}}\right)^2}$$

 and
\[\begin{split}N&= c_y^4 \left(b_{xy}^2+b_{xx}^2\right) \\&+b_y^2 \left(c_{yy}^2
\left(1+b_x^2+c_x^2\right)-2 b_y c_{yy} b_x
c_{xy}+\left(1+b_y^2+b_x^2+c_x^2\right) c_{xy}^2-2 b_y b_x c_{xy}
c_{xx}+b_y^2 c_{xx}^2\right)\\&-2 c_y^3 \left(b_{yy} c_x b_{xy}+c_x
b_{xy} b_{xx}+b_y \left(b_{xy} c_{xy}+b_{xx}
c_{xx}\right)\right)\\
&-2 b_y c_y \bigg[\left(1+b_x^2+c_x^2\right) b_{xy} c_{xy}+b_{yy}
\left(c_{yy} \left(1+b_x^2+c_x^2\right)-b_y b_x c_{xy}\right)\\&
+b_y^2 \left(b_{xy} c_{xy}+b_{xx} c_{xx}\right)-b_y \left(c_{yy}
\left(b_x b_{xy}-c_x c_{xy}\right)-c_x c_{xy} c_{xx}+b_x
\left(c_{xy} b_{xx}+b_{xy} c_{xx}\right)\right)\bigg]\\&+ c_y^2
\bigg[b_{yy}^2 \left(1+b_x^2+c_x^2\right)+\left(1+b_x^2+c_x^2\right)
b_{xy}^2+2 b_y b_{yy} \left(-b_x b_{xy}+c_x c_{xy}\right)\\& +b_y^2
\left(b_{xy}^2+c_{xy}^2+b_{xx}^2+c_{xx}^2\right)+2 b_y \left(c_{yy}
c_x b_{xy}-b_x b_{xy} b_{xx}+c_x \left(c_{xy} b_{xx}+b_{xy}
c_{xx}\right)\right)\bigg].\end{split}\] Let
$$A=b_{xy}^2-b_{yy} b_{xx},\ \ \ B=b_{xy} c_{xy}+b_{xx} c_{xx},\ \ \ C=c_{xy}^2-c_{yy} c_{xx}$$
$$\delta =\left(1+b_x^2+c_x^2\right).$$ Then, since $\Delta b =
\Delta c=0$ we obtain $$M=\left(c_{y}^2 A-2b_{y} c_{y} B+b_{y}^2
C\right)$$ and
\[\begin{split}N&= c_y^4 A \\&+b_y^2 \left(c_{yy}^2
\delta-2 b_y c_{yy} b_x c_{xy}+\left(1+b_y^2+b_x^2+c_x^2\right)
c_{xy}^2-2 b_y b_x c_{xy} c_{xx}+b_y^2 c_{xx}^2\right)\\&-2 c_y^3
\left(b_{yy} c_x b_{xy}+c_x b_{xy} b_{xx}+b_y B)\right)\\
&-2 b_y c_y \bigg[(\delta+b_y^2) B-b_y b_xb_{yy}  c_{xy} -b_y
\left(c_{yy} \left(b_x b_{xy}-c_x c_{xy}\right)-c_x c_{xy}
c_{xx}+b_x \left(c_{xy} b_{xx}+b_{xy} c_{xx}\right)\right)\bigg]\\&+
c_y^2 \bigg[\delta A+2 b_y b_{yy} \left(-b_x b_{xy}+c_x
c_{xy}\right)\\& +b_y^2 \left(A+C\right)+2 b_y \left(c_{yy} c_x
b_{xy}-b_x b_{xy} b_{xx}+c_x \left(c_{xy} b_{xx}+b_{xy}
c_{xx}\right)\right)\bigg].\end{split}\] Further, since $\Delta b=0$
\[\begin{split}N&= c_y^4 A \\&+b_y^2 \left(c_{yy}^2
\delta-2 b_y c_{yy} b_x c_{xy}+\left(1+b_y^2+b_x^2+c_x^2\right)
c_{xy}^2-2 b_y b_x c_{xy} c_{xx}+b_y^2 c_{xx}^2\right)-2 c_y^3 b_y B\\
&-2 b_y c_y \bigg[(\delta+b_y^2) B-b_y b_xb_{yy}  c_{xy} -b_y
\left(c_{yy} \left(b_x b_{xy}-c_x c_{xy}\right)-c_x c_{xy}
c_{xx}+b_x \left(c_{xy} b_{xx}+b_{xy} c_{xx}\right)\right)\bigg]\\&+
c_y^2 \bigg[\delta A+2 b_y b_{yy} \left(c_x c_{xy}\right) +b_y^2
\left(A+C\right)+2 b_y \left(c_{yy} c_x b_{xy}+c_x \left(c_{xy}
b_{xx}+b_{xy} c_{xx}\right)\right)\bigg]\end{split}\] and

\[\begin{split}N&= c_y^4 A +b_y^2 \left(c_{yy}^2
\delta+\left(1+b_y^2+b_x^2+c_x^2\right)
c_{xy}^2+b_y^2 c_{xx}^2\right)-2 c_y^3 b_y B\\
&-2 b_y c_y \bigg[(\delta+b_y^2) B-b_y b_xb_{yy}  c_{xy} -b_y
\left(c_{yy} \left(b_x b_{xy}-c_x c_{xy}\right)-c_x c_{xy}
c_{xx}+b_x \left(c_{xy} b_{xx}+b_{xy} c_{xx}\right)\right)\bigg]\\&+
c_y^2 \bigg[\delta A+2 b_y b_{yy} \left(c_x c_{xy}\right) +b_y^2
\left(A+C\right)+2 b_y \left(c_{yy} c_x b_{xy}+c_x \left(c_{xy}
b_{xx}+b_{xy} c_{xx}\right)\right)\bigg].\end{split}\] Finally

\[\begin{split}N&= c_y^4 A +b_y^2 \left(c_{yy}^2
\delta+\left(1+b_y^2+b_x^2+c_x^2\right)
c_{xy}^2+b_y^2 c_{xx}^2\right)-2 c_y^3 b_y B\\
&-2 b_y c_y \bigg[(\delta+b_y^2) B\bigg]+ c_y^2 \bigg[\delta A
+b_y^2 \left(A+C\right)\bigg]\\&= c_y^4 A +b_y^2
\left((b_y^2+\delta) C\right)-2 c_y^3 b_y B-2 b_y c_y
\bigg[(\delta+b_y^2) B\bigg]+ c_y^2 \bigg[\delta A +b_y^2
\left(A+C\right)\bigg]\\&=c_y^2(c_y^2+\delta+b_y^2)
A+b_y^2(b_y^2+\delta+c_y^2)C-2b_yc_y(\delta+b_y^2+c_y^2)C \\&=\gamma
\left(c_y^2 A+b_y^2C-2b_yc_y C\right)\end{split}\] where
$$\gamma=1+b_x^2+b_y^2+c_x^2+c_y^2.$$
As $$\mathfrak{D}_{\mathbf n}=\frac{N}{GM}$$ we obtain
\[\begin{split}\mathfrak{D}_{\mathbf n}&=\frac{\gamma}{G}\frac{\left(c_y^2 A+b_y^2C-2b_yc_y C\right)}{\left(c_y^2 A+b_y^2C-2b_yc_y C\right)}
\\&=\frac{1+b_y^2+c_y^2+b_x^2+c_x^2}{\sqrt{
b^2_y+c^2_y+\left(c_y b_x-b_y c_x\right)^2}}=\mathfrak{D}_{\mathbf
X}.\end{split}\] The last part of the theorem follows from the
formulas \eqref{frak} and \eqref{math}.
\end{proof}

\begin{corollary}[Berenstein theorem for noparametric harmonic surfaces] If the harmonic noparametric surface over $\mathbf R^2$ is
quasiconformal, then the surface is planar.
\end{corollary}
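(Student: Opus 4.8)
The plan is to reduce the statement to the classical Liouville theorem for harmonic functions. After a reparametrization we may write a nonparametric harmonic surface over $\mathbf R^2$ as $X(x,y)=(x,y,c(x,y))$ with $c$ harmonic on all of $\mathbf R^2$. First I would compute the relevant geometric quantities directly: $X_x=(1,0,c_x)$, $X_y=(0,1,c_y)$, hence $\|X_x\|^2+\|X_y\|^2=2+c_x^2+c_y^2$ and $\|X_x\times X_y\|=\sqrt{1+c_x^2+c_y^2}$. Writing $t:=c_x^2+c_y^2$, the distortion function is therefore
$$\mathfrak D_X=\frac{2+t}{2\sqrt{1+t}}.$$

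Next I would exploit the quasiconformality hypothesis. The inequality \eqref{surfeq} becomes $2+t\le\left(\mathcal K+\frac1{\mathcal K}\right)\sqrt{1+t}$; setting $s=\sqrt{1+t}\ge1$ this reads $s^2-\left(\mathcal K+\frac1{\mathcal K}\right)s+1\le0$, whose roots are $\frac1{\mathcal K}$ and $\mathcal K$, so $s\le\mathcal K$, i.e. $c_x^2+c_y^2=t\le\mathcal K^2-1$ at every point of $\mathbf R^2$.

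Finally I would invoke Liouville's theorem. Since $c$ is harmonic on $\mathbf R^2$, so are its partial derivatives $c_x$ and $c_y$; by the previous step they are bounded on $\mathbf R^2$, hence constant. Therefore $c$ is affine, $c(x,y)=\alpha x+\beta y+\gamma$, and $X(\mathbf R^2)$ is an affine plane, which is the assertion.

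There is essentially no serious obstacle here: the only point requiring care is the elementary estimate showing that boundedness of $\mathfrak D_X$ forces boundedness of $c_x^2+c_y^2$, and this uses crucially that the quasiconformality constant $\mathcal K$ is global rather than varying from point to point. An alternative route goes through Theorem~\ref{qaz} and Corollary~\ref{qazu}: the Gauss map of such a graph has positive third coordinate everywhere, so its complex Gauss map $\mathfrak g$ is a $\mathcal K$-quasiregular map of $\mathbf C$ into the exterior of the unit disk, equivalently (after an inversion) a bounded $\mathcal K$-quasiregular map of $\mathbf C$, hence constant by the Liouville theorem for quasiregular mappings, so the surface is planar, with Theorem~\ref{planar} covering the non-regular case. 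The direct computation above is however shorter and entirely self-contained.
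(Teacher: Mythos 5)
Your proof is correct, but it follows a genuinely different route from the paper's. The paper's proof is two lines: it invokes Theorem~\ref{qaz} to transfer quasiconformality from the surface to its Gauss map, and then appeals to L.~Simon's Bernstein-type theorem \cite{simon} for surfaces with quasiconformal Gauss map. You instead argue directly: for the graph $X=(x,y,c)$ the quasiconformality inequality \eqref{surfeq} reads $2+t\le(\mathcal{K}+\mathcal{K}^{-1})\sqrt{1+t}$ with $t=c_x^2+c_y^2$, and the substitution $s=\sqrt{1+t}$ correctly yields $s^2-(\mathcal{K}+\mathcal{K}^{-1})s+1\le 0$, whose roots are $\mathcal{K}$ and $\mathcal{K}^{-1}$, so $|\nabla c|^2\le\mathcal{K}^2-1$ globally; then $c_x,c_y$ are bounded entire harmonic functions, hence constant by Liouville, and $c$ is affine. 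Every step checks out, and your argument is more elementary and entirely self-contained: it uses harmonicity of $c$ in an essential way and needs neither Theorem~\ref{qaz} nor Simon's deep result. What the paper's route buys is generality and context --- Simon's theorem applies to a far wider class of surfaces than harmonic graphs, and routing the corollary through Theorem~\ref{qaz} showcases the main theorem of the paper; your secondary remark (that the complex Gauss map of a graph omits the unit disk, so a quasiregular Liouville theorem applies) is in fact a lightweight stand-in for Simon's theorem in this special case and is closer in spirit to what the paper does. One cosmetic point: a nonparametric surface over $\mathbf{R}^2$ is by definition a graph, so no reparametrization is needed at the outset.
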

\begin{proof}From the previous
theorem we find out that the Gauss map is quasiconformal. Then by a
theorem of L. Simon \cite{simon}, $X$ must be planar.
\end{proof}

\begin{corollary}
If $X=(a,b,c)$ is a harmonic surface then $$\mathfrak{D}_{\mathbf
n}=\frac{|\nabla a|^2+|\nabla b|^2+|\nabla c|^2}{2\sqrt{\left(b_y
a_x-a_y b_x\right)^2+\left(-c_y a_x+a_y c_x\right)^2+\left(c_y
b_x-b_y c_x\right)^2}}.$$
\end{corollary}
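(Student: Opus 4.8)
The plan is to obtain the formula as an immediate consequence of Theorem~\ref{qaz}, by reading off the right-hand side from the coordinate expression of the distortion function of the surface. Since $X=(a,b,c)$ is harmonic, Theorem~\ref{qaz} gives, at every point where the Gauss map $\mathbf n$ is regular, the identity $\mathfrak{D}_{\mathbf n}(z)=\mathfrak{D}_X(z)$, and by \eqref{frak} and \eqref{math} this common value is the distortion function $\mathfrak{D}_X=\dfrac{\|X_x\|^2+\|X_y\|^2}{2\|X_x\times X_y\|}$ of the surface. So it suffices to expand the numerator and the denominator of this quotient in terms of the partial derivatives of $a,b,c$.

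For the numerator I would simply write $\|X_x\|^2+\|X_y\|^2=(a_x^2+b_x^2+c_x^2)+(a_y^2+b_y^2+c_y^2)=|\nabla a|^2+|\nabla b|^2+|\nabla c|^2$. For the denominator I would use that the components of $X_x\times X_y=(a_x,b_x,c_x)\times(a_y,b_y,c_y)$ are, up to sign, the three $2\times 2$ minors of the Jacobian matrix of $X$, namely $b_xc_y-b_yc_x$, $c_xa_y-c_ya_x$ and $a_xb_y-a_yb_x$. Hence $$\|X_x\times X_y\|^2=\left(b_ya_x-a_yb_x\right)^2+\left(-c_ya_x+a_yc_x\right)^2+\left(c_yb_x-b_yc_x\right)^2,$$ and substituting both identities into $\mathfrak{D}_X$ produces exactly the asserted expression for $\mathfrak{D}_{\mathbf n}$.

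No genuine obstacle arises here; the only point to keep in mind is that the identity $\mathfrak{D}_{\mathbf n}=\mathfrak{D}_X$ of Theorem~\ref{qaz} does not depend on the choice of (conformal) parametrization of the surface, since both $\mathfrak{D}_{\mathbf n}$ and $\mathfrak{D}_X$ are invariant under conformal reparametrization. Consequently, although the proof of Theorem~\ref{qaz} was carried out under the normalization $a(z)=x$, the coordinate formula obtained above is valid for an arbitrary harmonic parametrization $X=(a,b,c)$, which is exactly what the statement asserts.
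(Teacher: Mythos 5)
Your proof is correct and matches the paper's (implicit) argument: the corollary is stated without proof precisely because it is the immediate consequence of Theorem~\ref{qaz} together with the coordinate expansion of $\mathfrak{D}_X=\frac{\|X_x\|^2+\|X_y\|^2}{2\|X_x\times X_y\|}$, exactly as you write it. Your added remark on conformal-reparametrization invariance correctly discharges the normalization $a(z)=x$ used in the theorem's proof, so nothing is missing.
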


\begin{proof}[Proof of Theorem~\ref{planar}] As in the proof of Theorem~\ref{qaz}, we can assume
that the first coordinate of the surface is $x$. Further, by using
the same notation
$$A=b_{xy}^2-b_{yy} b_{xx},\ \ \ B=b_{xy} c_{xy}+b_{xx} c_{xx},\ \ \ C=c_{xy}^2-c_{yy} c_{xx}$$
$$
G=\sqrt{{b_y}^2+{c_{y}}^2+\left({c_{y}} {b_x}-{b_y}
{c_{x}}\right)^2},$$ and
 $$M=\left(c_{y}^2 A-2b_{y} c_{y} B+b_{y}^2 C\right),$$ we obtain
 that  $$\frac{\partial}{dx}\mathbf{n}\times
 \frac{\partial}{dy}\mathbf{n}=0$$ if and only if $M=0$. The last is
 equivalent with $$(|c_y|\sqrt{A}-|b_y|\sqrt{C})^2+2(|b_y||c_y|\sqrt{AC}-b_yc_y
C)=0.$$ It is an elementary application of Cauchy-Schwarz inequality
that $$\sqrt{AC}\ge B.$$ Moreover $$AC=B^2$$ if and only if $$b_{xx}
c_{xx}=b_{xy}c_{xy}.$$ Thus $M=0$ if and only if
$$(|c_y|\sqrt{A}-|b_y|\sqrt{C})=0$$ and $$|b_y||c_y|\sqrt{AC}-b_yc_y
C=0.$$ Thus $$b_y\ge 0,\ \ \ c_y\ge 0,$$ $$b_{xx}
c_{xx}=b_{xy}c_{xy}$$ and $$c_y\sqrt{A}-b_y\sqrt{C}=0.$$ The cases
$b_y=0$ or $c_y=0$ are trivial. So assume that $b_y\neq 0$ and
$c_y\neq 0$. Combining the last two equalities we arrive at equality
$$c_y b_{yy}=b_y c_{yy}.$$
It follows that $$\frac{d}{dy}\frac{b_y}{c_y}=0$$ i.e.
$$\frac{b_y}{c_y}=\lambda(x),$$ for some real function $\lambda(x)$
depending only on $x$. Since $b_y$ and $c_y$ are real analytic, it
follows that $\lambda$ is a real analytic function, i.e.
$$\lambda(x)=\sum_{n=0}^\infty\lambda_n x^n.$$ Further
$$c_y = \sum_{n=0}^\infty (a_nz^n+b_n \bar z^n)$$ and therefore
$$b_y(z) =\sum_{n=0}^n\lambda_n x^n\sum_{n=0}^\infty (a_nz^n+b_n \bar
z^n).$$ Let us show that $\lambda_n=0$ for $n\ge 2$.

We will use the following well-known fact. Any harmonic function has
(locally) a unique representation as a sum of homogeneous harmonic
polynomials $\alpha_n z^n +\beta_n \overline z^n$. Since $\lambda_1
x (a_1 z + b_1 \bar z)$ is the only possible harmonic polinom of
degree $2$ in the expression for $b_y$, it must be $a_1 z + b_1 \bar
z=2a_1y$. Further
$$\sum_{k=0}^{n-1} \lambda_{n-k} x^{n-k} (a_k z^k +b_k \bar z^k)$$
is a harmonic polinom of degree $n$ and is therefore equal to
$$\alpha_n z^n +\beta_n \bar z^n.$$ Thus $$\sum_{k=0}^{n-1}
\lambda_{n-k} (z+\bar z)^{n-k} (a'_k z^k +b'_k \bar z^k)=\alpha_n
z^n +\beta_n \bar z^n.$$ Since the representation $$\sum_{i,j}
q_{ij} z^i \bar z^j$$ is unique, it follows that for $n\ge 2$,
$\lambda_n=0$. Hence
$$\frac{b_y}{c_y}=\lambda_1x +\lambda_0.$$ By a similar argument we
find that, if $\lambda_1\neq 0$, then $c_y=\omega y +\nu$. In this
case
$$c(x,y)=\frac{\omega}{2}(y^2-x^2)+\nu y+\nu_1 x +\nu_0$$ and $$b_y=(\lambda_1x
+\lambda_0)(\omega y+\nu).$$ Thus
$$b(x,y)=\frac{\omega}{2}(\lambda_1x
+\lambda_0)(y+\nu/\omega)^2-\frac{\omega}{6\lambda_1^2}(\lambda_1x
+\lambda_0)^3.$$ However $b$ and $c$ obtained in this case do not
satisfy the equation $M\equiv 0$ in an open set. Namely
$$M=\lambda_1^4(\nu +\omega y)^8\not\equiv 0.$$ It remains to consider the
case $\lambda_1=0$. Then
$$b(z)=\lambda_0 c(z)+\nu(x),$$ implying that $\nu(x)=\nu_0+\nu_1
x$. Thus
$$X(x)=(x,\lambda_0 c(z)+\nu_0+\nu_1 x, c(z)).$$ Thus the Gauss map
of the surface $X$ is $$
\left\{\frac{\nu_1}{\sqrt{2+\nu_1^2}},-\frac{1}{\sqrt{2+\nu_1^2}},\frac{1}{\sqrt{2+\nu_1^2}}\right\}$$
implying that the surface is planar.

\end{proof}
\subsection{An open problem} Whether Berenstein theorem is true for
quasiconformal parametric harmonic surfaces?

\bibliographystyle{amsplain}
\providecommand{\bysame}{\leavevmode\hbox
to3em{\hrulefill}\thinspace}
\providecommand{\MR}{\relax\ifhmode\unskip\space\fi MR }
\providecommand{\MRhref}[2]{%
  \href{http://www.ams.org/mathscinet-getitem?mr=#1}{#2}
} \providecommand{\href}[2]{#2}

\end{document}